\providecommand{\U}[1]{\protect\rule{.1in}{.1in}}
\newtheorem{theorem}{Theorem}
\newenvironment{proof}[1][Proof]{\noindent\textbf{#1.} }{\ \rule{0.5em}{0.5em}}
\begin{document}

\title{\textbf{SPECIAL SPACE CURVES CHARACTERIZED BY }$\det(\alpha^{(3)},\alpha
^{(4)},\alpha^{(5)})=0$}
\author{\textbf{Yusuf YAYLI}\thanks{Ankara University, Faculty of Science, Department
of Mathematics, Ankara, TURKEY}\textbf{, Semra SARACOGLU}\thanks{Siirt
University, Faculty of Science and Arts, Department of Mathematics, Siirt,
TURKEY}}
\maketitle

\begin{abstract}
In this study, by using the facts that $\det(\alpha^{(1)},\alpha^{(2)}%
,\alpha^{(3)})$ $=$ $0$ characterizes plane curve, and $\det(\alpha
^{(2)},\alpha^{(3)},\alpha^{(4)})$ $=$ $0$ does a curve of constant slope, we
give the special space curves that are characterized by $\det(\alpha
^{(3)},\alpha^{(4)},\alpha^{(5)})$ $=$ $0,$ in different approaches. We find
that the space curve is Salkowski if and only if $\det(\alpha^{(3)}%
,\alpha^{(4)},\alpha^{(5)})$ $=$ $0.$ The approach we used in this paper is
useful in understanding the role of the curves that are characterized by
$\det(\alpha^{(3)},\alpha^{(4)},\alpha^{(5)})=0$ in differential geometry.

\textbf{AMS Subj. Class.:} 53A04, 53A05, 53B30 .

\textbf{Key words: }Space curves, curvatures, salkowski curves, spherical helix.

\end{abstract}

\section{INTRODUCTION}

In the classical differential geometry general helices, Salkowski curves and
spherical helices are well-known curves. These curves are defined by the
property that the tangent makes a constant angle with a fixed straight line
(the axis of the general helix) [2-5]. Among all the space curves with
constant curvature, Salkowski curves are those for which the normal vector
maintains a constant angle with a fixed direction in the space [3]. The study
of these curves is given by Salkowski in [4], Monterde in [3], Kula,
Ekmek\c{c}i, Yayl\i, \.{I}larslan in [1] and Takenaka in [6]. Throughout this
paper, by using some characterizations from [6], we are going to present the
characterizations of the condition that $\det(\alpha^{(3)},\alpha^{(4)}%
,\alpha^{(5)})=0$ for special space curves in different approaches.

We first introduce the space curve $\alpha$ as
\[
\alpha(s)=(x(s),y(s),z(s)),
\]
where $s$ is the arclength parameter and denote two geometrical quantities,
curvature and torsion, by $\kappa$ and $\tau.$ These structures play essential
role in the theory of space curve. Such as, circles and circular helices are
curves with constant curvature and torsion [3].

Furthermore, as it is seen from the known facts in [2,5,6],

(1) The condition that $\det(\alpha^{(0)},\alpha^{(1)},\alpha^{(2)})=0$
characterizes a great circle, where $\alpha$ is a spherical curve,

(2) The condition that $\det(\alpha^{(1)},\alpha^{(2)},\alpha^{(3)})=0$
characterizes a plane curve,

(3) The condition that $\det(\alpha^{(2)},\alpha^{(3)},\alpha^{(4)})=0$
characterizes a curve of constant slope.

In [6], Takenaka represents the diffuculty in solving $\det(\alpha
^{(3)},\alpha^{(4)},\alpha^{(5)})=0.$ Therefore, he has given the following
form to put the complicated terms away%
\begin{equation}
\det(\alpha^{(3)},\alpha^{(4)},\alpha^{(5)})=\kappa^{4}\det\left(
\begin{array}
[c]{ccc}%
\varphi_{1} & \varphi_{2} & \varphi_{3}\\
\kappa & \kappa^{\prime} & \kappa^{\prime\prime}\\
\tau & \tau^{\prime} & \tau^{\prime\prime}%
\end{array}
\right)
\end{equation}
where
\begin{align}
\varphi_{1} &  =-(\frac{1}{\kappa})^{\prime}\\
\varphi_{2} &  =-(\frac{1}{\kappa})^{\prime\prime}-\frac{1}{\kappa}(\kappa
^{2}+\tau^{2}),\nonumber\\
\varphi_{3} &  =-(\frac{1}{\kappa})^{\prime\prime\prime}-\left\{  \frac
{1}{\kappa}(\kappa^{2}+\tau^{2})\right\}  ^{\prime}-\frac{1}{2\kappa}\left(
\kappa^{2}+\tau^{2}\right)  ^{\prime}.\nonumber
\end{align}

Moreover, by taking a curve with constant curvature, he shows that the
following conditions are equivalent:

$i.)$ $\det(\alpha^{(2)},\alpha^{(3)},\alpha^{(4)})=0$

$ii.)$ $\tau(s)=\mp\dfrac{a^{3}(bs+c)}{\left\{  1-a^{4}(bs+c)^{2}\right\}
^{1/2}}$, where $b,c\in%
%TCIMACRO{\U{211d} }%
%BeginExpansion
\mathbb{R}
%EndExpansion
$ and
\[
-\frac{1}{b}(1+c)\leq s\leq\frac{1}{b}(1-c).
\]
Then for the constant curvature $\kappa=a,$ he has found the torsion as:%
\begin{equation}
\tau(s)=\pm\frac{a^{3}(bs+c)}{\left[  1-a^{4}(bs+c)^{2}\right]  ^{1/2}}.
\end{equation}
On the other hand, in [3], Monterde shows that normal vectors of the curve
$\alpha$ parametrized by arc-length with $\kappa=1$ make a constant angle with
a fixed line in space if and only if
\begin{equation}
\tau(s)=\pm\dfrac{s}{\left[  \tan^{2}\phi-s^{2}\right]  ^{1/2}}.
\end{equation}

As it is seen Salkowski curves are slant helices. In this study, we
characterize Salkowski curves by the help of determinants. Accordingly, we
present new special characterization with slant helices.

Analogously, in this paper, considering the relationship between the space
curves, we have found important results. Moreover, we obtain that with
constant curvature $\kappa=1,$ the space curve is Salkowski if and only if
\begin{equation}
\det(\alpha^{(3)},\alpha^{(4)},\alpha^{(5)})=0
\end{equation}
and also we have showed the following three conditions are equivalent:

(1) The space curve $\alpha$ is Salkowski curve,

(2) $\det(\alpha^{(3)},\alpha^{(4)},\alpha^{(5)})=0$,

(3) $\tau(s)=\pm\dfrac{s}{\left[  \tan^{2}\phi-s^{2}\right]  ^{1/2}}.$

If we look previous studies in that field, we only meet the studies on the
calculations of curvatures. We have never seen a study about the calculations
on the family of determinants. In this study, we try to give the special space
curves that are characterized by $\det(\alpha^{(3)},\alpha^{(4)},\alpha
^{(5)})$ $=$ $0,$ in different approaches. We hope that this study will gain
different interpretation to the other studies in this field.

\section{SPHERICAL INDICATRICE CURVES}

In this section, we give the spherical indicatrice curves that are
characterized by $\det(\alpha^{(3)},\alpha^{(4)},\alpha^{(5)})=0$ and discuss
the main properties.

\begin{theorem}
Let
\begin{equation}
\alpha:I\rightarrow E^{3}%
\end{equation}%
\[
\text{ \ \ \ \ \ \ \ }s\mapsto\alpha(s)
\]
be a space curve that is parametrized arc-length with $\kappa\equiv1.$ The
tangent indicatrice of the space curve $\left(  T\right)  $ is spherical helix
if and only if
\begin{equation}
\det(\alpha^{(3)},\alpha^{(4)},\alpha^{(5)})=0.
\end{equation}

\end{theorem}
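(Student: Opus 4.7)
The plan is to establish the biconditional via a chain of equivalences that connect the geometric condition on the indicatrix to Monterde's explicit torsion formula (4) and, from there, to the vanishing of the determinant. Since $\kappa\equiv 1$, differentiating $T$ gives $T'(s)=\kappa N(s)=N(s)$, so $s$ is already the arc-length of the tangent indicatrix and its unit tangent is the principal normal $N$ of $\alpha$. The image $T(I)$ automatically lies on $S^{2}$, so the condition \textit{``$T$ is a spherical helix''} reduces to \textit{``$N$ makes a constant angle with some fixed direction $\vec u$''}, i.e.\ $\alpha$ is a slant helix; combined with $\kappa\equiv 1$, this is exactly the Salkowski condition, which by (4) is equivalent to $\tau(s)=\pm s/\sqrt{\tan^{2}\phi-s^{2}}$.

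Next I would compute the right-hand side of (1) under $\kappa\equiv 1$. All derivatives of $\kappa$ vanish, so the quantities in (2) collapse to $\varphi_{1}=0$, $\varphi_{2}=-(1+\tau^{2})$ and $\varphi_{3}=-3\tau\tau'$. In the resulting matrix the second row becomes $(1,0,0)$, and cofactor expansion along that row reduces the whole expression to
\[
\det(\alpha^{(3)},\alpha^{(4)},\alpha^{(5)})=(1+\tau^{2})\tau''-3\tau(\tau')^{2}.
\]
Hence vanishing of the determinant is equivalent to the single scalar ODE $(1+\tau^{2})\tau''=3\tau(\tau')^{2}$.

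It then remains to show that this ODE singles out precisely Monterde's family. For the \textit{``only if''} direction I would substitute (4): a short computation using $1+\tau^{2}=\tan^{2}\phi/(\tan^{2}\phi-s^{2})$, $\tau'=\tan^{2}\phi/(\tan^{2}\phi-s^{2})^{3/2}$, and $\tau''=3s\tan^{2}\phi/(\tan^{2}\phi-s^{2})^{5/2}$ shows that both $(1+\tau^{2})\tau''$ and $3\tau(\tau')^{2}$ equal $3s\tan^{4}\phi/(\tan^{2}\phi-s^{2})^{7/2}$, so the ODE holds. For the converse I would integrate directly: writing $u=\tau'$ as a function of $\tau$ so $\tau''=u\,du/d\tau$, the equation separates to $du/u=3\tau\,d\tau/(1+\tau^{2})$, giving $\tau'=C(1+\tau^{2})^{3/2}$; a second separation using $\int(1+\tau^{2})^{-3/2}d\tau=\tau/\sqrt{1+\tau^{2}}$ yields $\tau/\sqrt{1+\tau^{2}}=Cs+D$, which solved for $\tau$ reproduces (4) after absorbing $D$ into a shift of the arc-length origin and identifying $C=\cot\phi$.

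The principal obstacle is the correct simplification of $\varphi_{2}$ and $\varphi_{3}$ under $\kappa\equiv 1$, since (2) involves several terms that collapse only after noting $(1/\kappa)^{(k)}=0$ for $k\geq 1$; one has to be careful that the coefficient $3$ on $\tau(\tau')^{2}$ arises from combining the three separate pieces in $\varphi_{3}$. Once the determinant is reduced to the scalar ODE, the integration is routine and the geometric equivalence between \textit{``spherical helix on the indicatrix''} and the Salkowski condition on $\alpha$ is immediate from $T'=N$.
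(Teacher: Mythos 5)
Your proof is correct, but it takes a genuinely different route from the paper's. The paper's argument is a two-line reduction: since $\kappa\equiv 1$ we have $T'=\kappa N=N$, so $T=\alpha'$ is itself unit-speed in the same parameter $s$; applying the quoted characterization of general helices, $\det(\beta^{(2)},\beta^{(3)},\beta^{(4)})=0$ (fact (3) of the introduction), to $\beta=T$ and noting $T^{(k)}=\alpha^{(k+1)}$ yields $\det(\alpha^{(3)},\alpha^{(4)},\alpha^{(5)})=0$ immediately, with no computation of the $\varphi_{i}$, no ODE, and no appeal to an explicit torsion formula. You instead expand Takenaka's formula (1)--(2) under $\kappa\equiv 1$, reduce the condition to the scalar ODE $(1+\tau^{2})\tau''=3\tau(\tau')^{2}$, integrate it, and match the resulting family against Monterde's torsion (4), handling the geometric side by the same observation $T'=N$ together with Monterde's theorem. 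Your route is longer but buys more: it effectively proves the content of Theorems 2 and 3 of the paper along the way, and your ODE is in fact the \emph{corrected} form of the paper's equation (13), which as printed reads $2\tau''(1+\tau^{2})-3\tau(1+\tau^{2})'=0$ but should be $2\tau''(1+\tau^{2})-3\tau'(1+\tau^{2})'=0$ (one checks directly that Monterde's $\tau$ satisfies the latter and not the former); your cofactor expansion and the verification that both sides equal $3s\tan^{4}\phi/(\tan^{2}\phi-s^{2})^{7/2}$ are right. Two small caveats: your separation of variables divides by $\tau'$, so the constant-torsion solutions (circular helices, whose tangent indicatrix is a circle and hence only a degenerate helix) should be noted separately; and identifying Monterde's normalization with the general two-parameter solution $\tau=\pm(bs+c)/\bigl(1-(bs+c)^{2}\bigr)^{1/2}$ requires the shift of the arc-length origin you mention, which is harmless but should be stated once.
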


\begin{proof}
Let $\left(  T\right)  $ be tangent curve as:%
\begin{equation}
\alpha^{\prime}(s)=(T)
\end{equation}
and then%
\begin{equation}
T^{\prime}=\kappa N=N
\end{equation}
for $\kappa\equiv1.$ Here, $\left\Vert T^{\prime}(s)\right\Vert =1$ and the
arc-parameter of the curve $(T)$ is $s$ and also the parameters of the curves
$\alpha$ and $\left(  T\right)  $ are the same. According to all of these, if
$(T)$ is helix, then%
\begin{equation}
\det(T^{(2)},T^{(3)},T^{(4)})=0,\text{ }[6]
\end{equation}
Thus%
\begin{equation}
T=\alpha^{\prime}(s)
\end{equation}
and then
\begin{equation}
\det(\alpha^{(3)},\alpha^{(4)},\alpha^{(5)})=0.
\end{equation}
On the contrary, it can be easily proved as above smiliarly.
\end{proof}

\begin{theorem}
Let
\[
\alpha:I\rightarrow E^{3}%
\]%
\[
\text{ \ \ \ \ \ \ \ }s\mapsto\alpha(s)
\]
be a space curve that is parametrized arc-length with $\kappa\equiv1.$ The
tangent indicatrice $\left(  T\right)  $ of $\alpha$ is spherical helix if and
only if
\begin{equation}
2\tau^{\prime\prime}(1+\tau^{2})-3\tau(1+\tau^{2})^{\prime}=0
\end{equation}

\end{theorem}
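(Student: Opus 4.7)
My plan is to reduce via the preceding theorem to the vanishing of $\det(\alpha^{(3)},\alpha^{(4)},\alpha^{(5)})$, and then to unpack Takenaka's identity (1) under the hypothesis $\kappa\equiv 1$.

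By the preceding theorem, the tangent indicatrix $(T)$ is a spherical helix if and only if $\det(\alpha^{(3)},\alpha^{(4)},\alpha^{(5)})=0$. It therefore suffices to show that, under $\kappa\equiv 1$, this vanishing is equivalent to the stated ODE in $\tau$. For this I would substitute directly into (1) and (2).

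With $\kappa\equiv 1$ we have $\kappa'=\kappa''=0$ and $(1/\kappa)^{(k)}=0$ for every $k\geq 1$. The defining relations (2) therefore collapse to
\[
\varphi_{1}=0,\qquad \varphi_{2}=-(1+\tau^{2}),\qquad \varphi_{3}=-(1+\tau^{2})'-\tfrac{1}{2}(1+\tau^{2})'=-\tfrac{3}{2}(1+\tau^{2})'.
\]
Inserting these along with $\kappa=1$ and $\kappa'=\kappa''=0$ into (1) leaves
\[
\det(\alpha^{(3)},\alpha^{(4)},\alpha^{(5)}) \;=\; \det\begin{pmatrix} 0 & -(1+\tau^{2}) & -\tfrac{3}{2}(1+\tau^{2})' \\ 1 & 0 & 0 \\ \tau & \tau' & \tau'' \end{pmatrix}.
\]
The middle row has only one nonzero entry, so cofactor expansion along it reduces the right-hand side to $(1+\tau^{2})\tau''-\tfrac{3}{2}(1+\tau^{2})'\tau'$; setting this equal to zero and clearing the factor $1/2$ produces the claimed relation.

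The plan contains no real conceptual obstacle beyond bookkeeping: once the preceding theorem and identity (1) are granted, the proof reduces to evaluating $\varphi_{3}$ and performing a single $2\times 2$ cofactor expansion. The only place where arithmetic errors are likely to creep in is the coefficient $-\tfrac{3}{2}$ in $\varphi_{3}$, where the three summands partially cancel; this is precisely what produces the factor $3$ appearing in the final ODE.
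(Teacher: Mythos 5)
Your approach is exactly the paper's: invoke the preceding theorem to replace ``spherical helix'' by $\det(\alpha^{(3)},\alpha^{(4)},\alpha^{(5)})=0$, specialize Takenaka's formulas (1)--(2) to $\kappa\equiv 1$, and expand the resulting determinant along its middle row. Your evaluation of $\varphi_{1},\varphi_{2},\varphi_{3}$ and the cofactor expansion are correct. The one point to flag is your final sentence: what the expansion actually yields is
\[
2\tau^{\prime\prime}(1+\tau^{2})-3\tau^{\prime}(1+\tau^{2})^{\prime}=0,
\]
with a $\tau^{\prime}$ in the second term, whereas the theorem as stated (and the paper's own proof) has $3\tau(1+\tau^{2})^{\prime}$. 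Since $(1+\tau^{2})^{\prime}=2\tau\tau^{\prime}$, the two second terms are $6\tau(\tau^{\prime})^{2}$ versus $6\tau^{2}\tau^{\prime}$, so the two ODEs are genuinely different. Your computation is the correct one: the torsion $\tau=\pm(bs+c)\left[1-(bs+c)^{2}\right]^{-1/2}$ appearing in Theorem 3 satisfies the equation with $\tau^{\prime}$ but not the one with $\tau$. In other words, the discrepancy is a typo in the theorem statement rather than an error in your derivation; but you should not assert that your expression ``produces the claimed relation'' verbatim, since as written it does not --- state the corrected equation explicitly.
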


\begin{proof}
The tangent indicatrix of the space curve $\alpha$ is spherical helix if and
only if
\[
\det(\alpha^{(3)},\alpha^{(4)},\alpha^{(5)})=0
\]
and also from [6]%

\begin{equation}
\det(\alpha^{(3)},\alpha^{(4)},\alpha^{(5)})=\kappa^{4}\det\left[
\begin{array}
[c]{ccc}%
\varphi_{1} & \varphi_{2} & \varphi_{3}\\
\kappa & \kappa^{\prime} & \kappa^{\prime\prime}\\
\tau & \tau^{\prime} & \tau^{\prime\prime}%
\end{array}
\right]  .
\end{equation}
Here, for $\kappa\equiv1,$%
\[
\det(\alpha^{(3)},\alpha^{(4)},\alpha^{(5)})=\det\left[
\begin{array}
[c]{ccc}%
0 & -(1+\tau^{2}) & -(1+\tau^{2})^{\prime}-\frac{1}{2}(1+\tau^{2})^{\prime}\\
1 & 0 & 0\\
\tau & \tau^{\prime} & \tau^{\prime\prime}%
\end{array}
\right]  =0
\]
then we obtain%
\begin{equation}
2\tau^{\prime\prime}(1+\tau^{2})-3\tau(1+\tau^{2})^{\prime}=0
\end{equation}
Here, $\tau$ is torsion of the curve $\alpha(s).$
\end{proof}

\begin{theorem}
Let
\[
\alpha:I\rightarrow E^{3}%
\]%
\[
\text{ \ \ \ \ \ \ \ }s\mapsto\alpha(s)
\]
be a space curve that is parametrized arc-length with $\kappa\equiv1.$ The
tangent indicatrice $(T)$ of the space curve $\alpha$ is spherical helix if
and only if
\begin{equation}
\tau(s)=\pm\frac{bs+c}{\left[  1-(bs+c)^{2}\right]  ^{1/2}}%
\end{equation}
where $b\neq0$ and $b,c\in%
%TCIMACRO{\U{211d} }%
%BeginExpansion
\mathbb{R}
%EndExpansion
$,%
\[
-\frac{1}{b}(1+c)\leq s\leq\frac{1}{b}(1-c).
\]

\end{theorem}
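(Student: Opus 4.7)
The plan is to combine Theorem~2, which rephrases the spherical--helix condition as a second--order ODE in $\tau$, with a standard reduction of order. Expanding $(1+\tau^{2})'=2\tau\tau'$ in the equation supplied by Theorem~2 brings it into the form
\[
(1+\tau^{2})\,\tau''=3\tau(\tau')^{2}.
\]
First I would set $u=\tau'$ and use $\tau''=u\,du/d\tau$ to turn this into a first--order separable equation
\[
\frac{du}{u}=\frac{3\tau\,d\tau}{1+\tau^{2}}.
\]
Integrating yields $\ln|u|=\tfrac{3}{2}\ln(1+\tau^{2})+\text{const}$, i.e.\ $\tau'=b(1+\tau^{2})^{3/2}$ for some constant $b$, which is nonzero once we exclude the degenerate case of constant $\tau$.

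Next I would separate variables a second time, obtaining $(1+\tau^{2})^{-3/2}\,d\tau=b\,ds$. The step that drives the explicit answer is the antiderivative (computed with $\tau=\tan\theta$)
\[
\int\frac{d\tau}{(1+\tau^{2})^{3/2}}=\frac{\tau}{\sqrt{1+\tau^{2}}},
\]
so one arrives at the implicit relation $\tau/\sqrt{1+\tau^{2}}=bs+c$. Squaring and rearranging gives $\tau^{2}\bigl(1-(bs+c)^{2}\bigr)=(bs+c)^{2}$, whence
\[
\tau(s)=\pm\frac{bs+c}{\sqrt{1-(bs+c)^{2}}},
\]
and the requirement $|bs+c|<1$ translates directly into the stated interval $-\tfrac{1}{b}(1+c)\le s\le \tfrac{1}{b}(1-c)$.

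The converse direction is a direct verification. With $\tau$ in the asserted form and $w=bs+c$, one checks $1+\tau^{2}=1/(1-w^{2})$ and $\tau'=b(1-w^{2})^{-3/2}=b(1+\tau^{2})^{3/2}$; differentiating once more gives $\tau''=3b\tau(1+\tau^{2})^{1/2}\tau'$, so $(1+\tau^{2})\tau''=3\tau(\tau')^{2}$, which is precisely the ODE of Theorem~2, and Theorem~2 then returns the spherical--helix property for $(T)$.

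I do not anticipate a serious obstacle: the argument is essentially two integrations. The two delicate points are (i) the sign choice, where the $\pm$ in the conclusion records the branch of the square root that one picks when solving the implicit equation, and (ii) the degenerate constant--torsion case $\tau'\equiv 0$, which satisfies the ODE trivially but corresponds to $b=0$ outside the scope of the statement; this should either be excluded at the outset or handled as a limiting case (a circular helix, whose tangent indicatrix is a circle on $S^{2}$).
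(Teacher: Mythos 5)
Your argument is correct, and it takes a genuinely different (and more informative) route than the paper: the paper's own proof of this theorem merely records $\varphi_{1}=0$, $\varphi_{2}=-(1+\tau^{2})$, $\varphi_{3}=-\tfrac{3}{2}(1+\tau^{2})'$ and then defers the entire integration to ``similar calculations'' in Takenaka [6], whereas your two separations of variables, the antiderivative $\int(1+\tau^{2})^{-3/2}\,d\tau=\tau/\sqrt{1+\tau^{2}}$, the explicit converse verification, and the discussion of the degenerate case $b=0$ supply exactly the content the paper omits. One step needs to be made honest, however. The equation printed in Theorem~2 is $2\tau''(1+\tau^{2})-3\tau(1+\tau^{2})'=0$; substituting $(1+\tau^{2})'=2\tau\tau'$ into that \emph{literal} equation yields $(1+\tau^{2})\tau''=3\tau^{2}\tau'$, which is not your ODE $(1+\tau^{2})\tau''=3\tau(\tau')^{2}$. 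Your ODE is nevertheless the right one: expanding the determinant from the proof of Theorem~2 along its second row gives
\[
\det\!\left[
\begin{array}{ccc}
0 & -(1+\tau^{2}) & -\tfrac{3}{2}(1+\tau^{2})'\\
1 & 0 & 0\\
\tau & \tau' & \tau''
\end{array}
\right]
=(1+\tau^{2})\tau''-\tfrac{3}{2}\,\tau'\,(1+\tau^{2})',
\]
so the factor multiplying $(1+\tau^{2})'$ in Theorem~2 should be $\tau'$, not $\tau$ --- a typo in the paper that you have silently corrected. State this correction explicitly; with it, your reduction of order is a valid derivation (and, as a sanity check, your first integral $\tau'=b(1+\tau^{2})^{3/2}$ is precisely the classical slant-helix condition $\tau'/(1+\tau^{2})^{3/2}=\mathrm{const}$ for $\kappa\equiv1$). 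Finally, the division by $u=\tau'$ is justified by the dichotomy that a solution of $\tau''=3\tau(\tau')^{2}/(1+\tau^{2})$ either has $\tau'\equiv0$ or $\tau'$ nowhere zero (uniqueness for this Lipschitz second-order ODE), which cleanly isolates the circular-helix case you already flagged as falling under $b=0$, outside the scope of the statement.
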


\begin{proof}
Under the condition $\kappa\equiv1,$ we have $\varphi_{1}=0,$ $\varphi
_{2}=-(1+\tau^{2})$ and $\varphi_{3}=-(1+\tau^{2})^{\prime}-\frac{1}{2}%
(1+\tau^{2})^{\prime}$ from the proof of the theorem above . By giving
similliar calculations from [6], the theorem can be easily proved.
\end{proof}

\textbf{Result. }By making similiar calculations we can get that
\begin{equation}
\tau=\pm\frac{bs+c}{\left[  1-(bs+c)^{2}\right]  ^{1/2}}%
\end{equation}
where $b=\dfrac{1}{\tan\phi},$ is the solution of differential equation as
given:%
\begin{equation}
\det(\alpha^{(3)},\alpha^{(4)},\alpha^{(5)})=2\tau^{\prime\prime}(1+\tau
^{2})-3\tau(1+\tau^{2})^{\prime}=0
\end{equation}

On the other hand, the following figure can be given as an example for showing
the curve whose $\kappa$ and $\tau$ satisfy the condition above for
$\kappa=1:$%
\[%
%TCIMACRO{\FRAME{itbpF}{4.3016in}{1.4875in}{0in}{}{}{Figure}%
%{\special{ language "Scientific Word";  type "GRAPHIC";
%maintain-aspect-ratio TRUE;  display "USEDEF";  valid_file "T";
%width 4.3016in;  height 1.4875in;  depth 0in;  original-width 7.1416in;
%original-height 2.4526in;  cropleft "0";  croptop "1";  cropright "1";
%cropbottom "0";  tempfilename 'LYB2GX01.wmf';tempfile-properties "XPR";}}}%
%BeginExpansion
{\includegraphics[
natheight=2.452600in,
natwidth=7.141600in,
height=1.4875in,
width=4.3016in
]%
{LYB2GX01.wmf}%
}%
%EndExpansion
\]

\[
Figure1.\text{Salkowski cuves with }\kappa\equiv1\text{ [3]}%
\]

\section{CHARACTERIZATIONS OF SALKOWSKI CURVES}

In this section, we will introduce the condition that $\det(\alpha
^{(3)},\alpha^{(4)},\alpha^{(5)})=0$ for Salkowski curves in different
approaches .

\begin{theorem}
Let
\[
\alpha:I\rightarrow E^{3}%
\]%
\[
\text{ \ \ \ \ \ \ \ }s\mapsto\alpha(s)
\]
be a space curve that is parametrized arc-length with $\kappa\equiv1.$ The
space curve $\alpha$ is Salkowski curve if and only if
\[
\det(\alpha^{(3)},\alpha^{(4)},\alpha^{(5)})=0.
\]
\ 
\end{theorem}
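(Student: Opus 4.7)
The plan is to combine the preceding results of this section with Monterde's characterization (equation~(4) in the introduction) to reduce everything to a comparison of two explicit formulas for the torsion.

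First I would unpack the Salkowski condition under the hypothesis $\kappa\equiv 1$. By definition, a Salkowski curve has constant curvature (which we already have) and its principal normal makes a constant angle with a fixed direction in space. By the result of Monterde cited in the introduction, for a unit speed curve with $\kappa\equiv 1$ the latter condition is equivalent to
\[
\tau(s)=\pm\frac{s}{\left[\tan^{2}\phi-s^{2}\right]^{1/2}}
\]
for some constant angle $\phi$. Thus proving the theorem reduces to establishing
\[
\det(\alpha^{(3)},\alpha^{(4)},\alpha^{(5)})=0
\ \Longleftrightarrow\
\tau(s)=\pm\frac{s}{\left[\tan^{2}\phi-s^{2}\right]^{1/2}}.
\]

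For the reverse direction I would appeal directly to Theorem~3: under $\kappa\equiv 1$, the vanishing of $\det(\alpha^{(3)},\alpha^{(4)},\alpha^{(5)})$ is equivalent to
\[
\tau(s)=\pm\frac{bs+c}{\left[1-(bs+c)^{2}\right]^{1/2}},\qquad b\neq 0.
\]
The key algebraic step is then to identify this family with Monterde's family. Setting $b=1/\tan\phi$ and absorbing the translation constant $c$ into a shift of the arc-length origin (which does not affect the Salkowski property since the geometric condition on the principal normal is translation-invariant in $s$), the expression simplifies to $\pm s/[\tan^{2}\phi-s^{2}]^{1/2}$, matching Monterde's formula. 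Hence $\alpha$ has constant curvature and its principal normal keeps a constant angle with a fixed direction, so $\alpha$ is a Salkowski curve.

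For the forward direction I would run the same chain in reverse: if $\alpha$ is Salkowski with $\kappa\equiv 1$, Monterde gives $\tau(s)=\pm s/[\tan^{2}\phi-s^{2}]^{1/2}$, which after setting $b=1/\tan\phi$ and $c=0$ is exactly a member of the family appearing in Theorem~3. Therefore by Theorem~3 the determinant vanishes. The main obstacle I anticipate is purely bookkeeping: making sure the translation in $s$ and the identification $b=1/\tan\phi$ are carried out consistently, and checking that the domain condition $-(1+c)/b\le s\le (1-c)/b$ matches the natural domain $|s|<|\tan\phi|$ of Monterde's formula; once that match is made explicit, no further calculation is needed since Theorems~2 and~3 have already done the heavy lifting.
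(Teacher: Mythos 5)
Your proof is essentially correct, but it takes a genuinely different route from the paper's own argument for this theorem. The paper proves the equivalence geometrically: it observes that for $\kappa\equiv 1$ the curve $\beta=\alpha'$ (the tangent indicatrix) has $\beta'=N$, so the Salkowski/slant-helix condition $\langle N,d\rangle=\mathrm{const}$ says precisely that $\beta$ is a spherical helix; Theorem~1 then converts ``$(T)$ is a spherical helix'' directly into $\det(\alpha^{(3)},\alpha^{(4)},\alpha^{(5)})=0$, with no torsion formula ever appearing. You instead route both sides through explicit expressions for $\tau$: Monterde's formula $\tau=\pm s/[\tan^{2}\phi-s^{2}]^{1/2}$ on the Salkowski side and the family $\tau=\pm(bs+c)/[1-(bs+c)^{2}]^{1/2}$ from Theorem~3 on the determinant side, matching them via $b=1/\tan\phi$ and an arc-length shift absorbing $c$. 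This is in fact exactly the strategy the paper reserves for its Theorem~5 (the three-way equivalence), so your argument is sound and even supplies the explicit torsion as a byproduct; the trade-off is that it leans on Theorem~3, whose proof the paper only sketches, and requires the bookkeeping you flag (checking that the shift in $s$ preserves the Salkowski property and that the domain $-(1+c)/b\le s\le(1-c)/b$ matches $|s|<|\tan\phi|$), whereas the paper's route via the tangent indicatrix needs only the qualitative Theorem~1 and no computation at all.
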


\begin{proof}
If the space curve $\alpha$ is Salkowski and slant helix, then
\begin{equation}
\left\langle N,d\right\rangle =\cos\theta=\text{ constant},
\end{equation}
for a fixed line $d$ in a space. Besides, it can be easily given that for the
normal vector $N$ of the space curve $\alpha:$%
\begin{equation}
\beta(s)=T\text{ and }\beta^{\prime}(s)=N\text{ for }\kappa\equiv1.
\end{equation}
Here,
\begin{equation}
\left\langle \beta^{\prime}(s),d\right\rangle =\left\langle N,d\right\rangle
=cons\tan t.
\end{equation}
In this case, the curve $\beta(s)$ is spherical helix. From the Theorem.1, for
$\beta=\alpha^{\prime},$%
\begin{equation}
\det(\beta^{(2)},\beta^{(3)},\beta^{(4)})=0\text{ }%
\end{equation}
and then
\[
\det(\alpha^{(3)},\alpha^{(4)},\alpha^{(5)})=0.
\]
In contrary,
\[
\det(\alpha^{(3)},\alpha^{(4)},\alpha^{(5)})=0
\]
then for $\beta=\alpha^{(1)}$, we obtain
\[
\det(\beta^{(2)},\beta^{(3)},\beta^{(4)})=0\text{.}%
\]
Thus, $\beta(s)$ is a spherical helix. In that case, the curve $\alpha(s)$ is
slant helix. Then, we can easily get that $\alpha(s)$ is Salkowski for
$\kappa\equiv1$ from the calculations above.

\begin{theorem}
Let
\begin{equation}
\alpha:I\rightarrow E^{3}%
\end{equation}%
\[
\text{ \ \ \ \ \ \ \ }s\mapsto\alpha(s)
\]
be a space curve that is parametrized arc-length with $\kappa\equiv1.$ The
following three conditions are equivalent:

\textbf{1.) }The space curve $\alpha$ is Salkowski curve,

\textbf{2.) }$\det(\alpha^{(3)},\alpha^{(4)},\alpha^{(5)})=0$

\textbf{3.) }$\tau(s)=\pm\dfrac{s}{\left[  \tan^{2}\phi-s^{2}\right]  ^{1/2}}$

\begin{proof}
In the first step, it should be shown that the curve $\alpha$ is Salkowski if
and only if $\tau(s)=\pm\dfrac{s}{\left[  \tan^{2}\phi-s^{2}\right]  ^{1/2}}$
[3]. \ \ \ \ \ \ \ \ \ \ \ \ \ \ \ \ \ \ \ \ \ \ \ \ \ \ 

In the second step, we should prove that $\det(\alpha^{(3)},\alpha
^{(4)},\alpha^{(5)})=0$ if and only if
\[
\mathbf{\ }\tau(s)=\pm\dfrac{s}{\left[  \tan^{2}\phi-s^{2}\right]  ^{1/2}}.
\]
Here, by taking $c=0$ and $b=\dfrac{1}{\tan\phi}$ in Theorem 3, we get
\[
\tau(s)=\pm\dfrac{s}{\left[  \tan^{2}\phi-s^{2}\right]  ^{1/2}}.
\]
Hence, the theorem is proved.
\end{proof}
\end{theorem}
\end{proof}

\section{CONCLUSIONS}

The starting point of this study is to give the special space curves that are
characterized by $\det(\alpha^{(3)},\alpha^{(4)},\alpha^{(5)})=0,$ in
different approach. We have developed this approach with discussing main
properties of spherical curves, slant helices, Salkowski curves and
relationship between these curves. At this time, different approaches that we
give here have showed us the space curve is Salkowski if and only if
$\det(\alpha^{(3)},\alpha^{(4)},\alpha^{(5)})=0.$ Additionally, it is obtained
that the tangent indicatrice of the space curve $(T)$ is spherical helix if
and only if $\det(\alpha^{(3)},\alpha^{(4)},\alpha^{(5)})=0.$

\end{document}